\newtheorem{theorem}{Theorem}[section]
\newtheorem{lemma}{Lemma}[section]
\newtheorem{corollary}{Corollary}
\def\BibTeX{{\rm B\kern-.05em{\sc i\kern-.025em b}\kern-.08em
    T\kern-.1667em\lower.7ex\hbox{E}\kern-.125emX}}
\begin{document}
\history{Date of publication xxxx 00, 0000, date of current version xxxx 00, 0000.}
\doi{10.1109/ACCESS.2023.0322000}

\title{Net-Zero Energy House-oriented Linear Programming for the Sizing Problem of Photovoltaic Panels and Batteries}
\author{\uppercase{A. Daniel Carnerero}\authorrefmark{1}, \uppercase{Taichi Tanaka}\authorrefmark{1}, \uppercase{Mengmou Li}\authorrefmark{2}, \uppercase{Takeshi Hatanaka}\authorrefmark{1}, \uppercase{Yasuaki Wasa}\authorrefmark{3}, \uppercase{Kenji Hirata}\authorrefmark{4},
\uppercase{Yoshiaki Ushifusa}\authorrefmark{5}, and \uppercase{Takanori Ida}\authorrefmark{6}
}

\address[1]{Department of Systems and Control Engineering School of Engineering, Tokyo Institute of Technology, Tokyo, 152-8550, Japan (e-mail: \{carnerero; tanaka\}@hfg.sc.e.titech.ac.jp, hatanaka@sc.e.titech.ac.jp).}
\address[2]{Graduate School of Advanced Science and Engineering, Hiroshima University, Higashi-Hiroshima 739-0046, Japan (mengmou@hiroshima-u.ac.jp).}
\address[3]{Department of Electrical Engineering and Bioscience, School of Advanced Science and Engineering, Faculty of Science and Engineering, Waseda University, Tokyo 169-8555, Japan (e-mail: wasa@waseda.jp)}
\address[4]{Department of Electrical and Electronic Engineering, School of Engineering, University of Toyama, Toyama 930-8555, Japan (e-mail: hirata@eng.u-toyama.ac.jp)}
\address[5]{Faculty of Economics and Business Administration, The University of Kitakyushu, Fukuoka 802-8577, Japan (e-mail: ushifusa@kitakyu-u.ac.jp).}
\address[6]{Graduate School of Economics, Kyoto University, Kyoto 606-8501, Japan (e-mail: ida@econ.kyoto-u.ac.jp).}
\tfootnote{This work is supported by the Ministry of the Environment, Government of Japan, and JSPS KAKENHI under Grant 21J13956.}

\markboth
{A.D. Carnerero \headeretal: Zero Energy House-oriented Linear Programming for the Sizing Problem of Photovoltaic Panels and Batteries}
{A.D. Carnerero \headeretal: Zero Energy House-oriented Linear Programming for the Sizing Problem of Photovoltaic Panels and Batteries}

\corresp{Corresponding author: A. Daniel Carnerero (e-mail: carnerero@hfg.sc.e.titech.ac.jp).}

\begin{abstract}
The global drive towards carbon neutrality has led to a significant increase in the number of power plants based on renewable energy sources (RES). Concurrently, numerous households are adopting RES to generate their own energy, aiming to decrease both electricity costs and carbon footprints.
To support these users, many papers have been devoted to developing optimal investment strategies for residential energy systems. However, there is still a significant gap as these studies often neglect important aspects like carbon neutrality.
For this reason, in this paper, we explore the concept of net-zero energy houses (ZEHs)---houses designed to have an annual net energy consumption around
zero---by presenting a constrained optimization problem to find the optimal number of photovoltaic panels and the optimal size of the battery system for home integration.
Solving this constrained optimization problem is difficult due to its nonconvex constraints.
Nevertheless, by applying a series of transformations,
we reveal that it is possible to find an equivalent linear programming (LP) problem which is computationally tractable. The attainment of ZEH can be tackled by introducing a single constraint in the optimization problem. Additionally, we propose a sharing economy approach to the investment problem, offering a strategy that could potentially reduce investment costs and facilitate the attainment of ZEH more efficiently. Finally, we apply the proposed frameworks to a neighborhood in Japan as a case study, demonstrating the potential for long-term ZEH attainment. The results show that, under the right incentive, users can achieve ZEH, reduce their electricity costs and have a minimal impact on the main grid.
\end{abstract}

\begin{keywords}
net-zero energy houses, optimization, linear programming. 
\end{keywords}

\titlepgskip=-21pt

\maketitle

\section{Introduction}
As the global economy recovers from the impacts of COVID-19, energy consumption has resumed its upward trend. 
In 2022, global electricity demand saw an increase of $2.7\%$ compared to the previous year \cite{IEA2023}.
Meanwhile, the recent worldwide energy crisis accelerates a shift from reliance on fossil fuels towards renewable energy sources as well as adopting more efficient energy consumption schedules.
For example, \cite{yan2022low} proposes an optimal carbon-dispatching method, which allows entities to optimize their operation costs and carbon emissions.
In this context, the role of buildings is also significant – they accounted for $30\%$ of the global final energy consumption in 2021, with electricity constituting about $35\%$ of this building-related energy use \cite{IEA2022}. As a result, there is an urgent need to alleviate the energy burden and maintain a balanced power supply, beginning at the local level, especially within residential buildings.

In response to this need and also due to the growing urgency to reduce carbon dioxide ($\text{CO}_2$) emissions, the concept of net-zero energy houses (ZEHs) \cite{efficiency2015definition} is receiving increasing attention \cite{chen2021carbon}.
A ZEH is a house whose annual net energy consumption is around zero. To achieve this objective, householders are required to actively generate their own energy, primarily through renewable resources. 
Consequently, a householder must satisfy the following energy balance equation to attain the ZEH goal
\begin{equation}
    \sum_{k=0}^{K} G_{k} - \sum_{k=0}^{K} X_{k} = 0,
    \label{eq:ZEH}
\end{equation}
where $G_{k}$ is the energy generation by renewable sources at time instant $k$, $X_{k}$ is the demand at time instant $k$ and $K$ is a certain long time horizon, typically a year. However, satisfying equation \eqref{eq:ZEH} is generally
impossible due to uncertain demand and uncontrollable energy generation.
One alternative is to relax equation \eqref{eq:ZEH} into an inequality constraint. While this might mean that net-zero energy consumption is not achieved precisely, it does ensure that carbon neutrality is met since there is no dependency on non-renewable energy sources. Thus, adding this constraint to the investment optimization problem would help us succeed in the achievement of the ZEH goal.

Among various renewable sources, photovoltaic (PV) technology, in particular, emerges as an appealing option due to its affordable implementation costs and broad adaptability \cite{jager2019pv}.
However, a widely acknowledged limitation of PV systems is their dependency on uncontrollable 
factors like solar radiation, making it difficult to ensure consistent energy generation. 
This becomes even clearer if we compare the common load profiles and PV generation profiles together.
Let us consider, for example, the consumption and PV generation profiles for a typical day in a household in Kitakyushu, Japan, as shown in Fig.~\ref{fig:example_PV_consumption}.
\begin{figure}[tbh]
    \centering
    \includegraphics[width=0.48\textwidth]{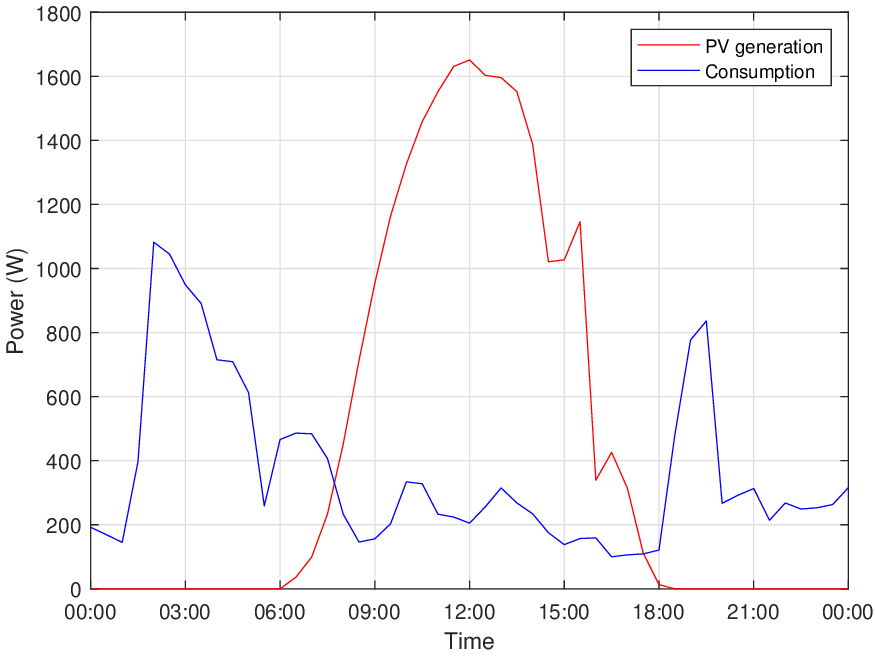}
    \caption{Example of PV and consumption profile for a single day.}
    \label{fig:example_PV_consumption}
\end{figure}
It can be observed that the majority of the energy is generated during hours when there is minimal usage while, conversely, the peak demand typically occurs during periods when no energy generation is taking place. For this reason, it is desirable that self-supply systems are accompanied by a battery, allowing the user to save current unused energy for the near future \cite{hoppmann2014economic}.
It is also important to note that the philosophy of ZEH is not merely about maintaining a balance between energy consumption and production, but also about the optimal use of energy through various strategies, including improved heat insulation, the use of high-efficiency equipment, etc.

The research on optimal sizing for renewable energy and battery systems is vast and encompasses a variety of scenarios. For example, \cite{kumar2023adaptive} considers the investment problem of residential rooftop PV panels in India by studying different market models. However, the integration of a battery system (which might be useful when the generated energy is not enough) is not considered. A similar case study in Philippines can be found in \cite{guno2021optimal}. In addition to PV panels, battery systems have also been included
in the optimization problems appearing in the literature. For instance, \cite{zhu2014optimal} addresses the daily energy flow control problem for the optimal management of residential PV and battery systems, developing a design methodology from the proposed control policy. Similarly, \cite{akram2017innovative} proposes an investment optimization problem considering wind turbines, PV panels, battery systems, and super-capacitors. The problem itself is very complex and cannot be solved by conventional methods. Additional studies addressing these problems from a different perspective are also available, such as \cite{heidari2023planning} where the problem of the loss of inertia in the generation power system due to the introduction of many renewable energy sources is addressed. There, from a regional perspective, an investment problem involving an inertia constraint is solved and compared with the unconstrained problem, quantifying the cost of the introduction of the inertia within the renewable energy-based system.
For an exhaustive list of works, we refer the interested reader to the following review papers \cite{yang2018battery,lian2019review,ridha2021multi}.

However, a significant portion of these studies mainly targets investments in PV panels and battery systems with the primary objective of maintaining the power balance but they often neglect broader aspects like carbon neutrality. Notably, in recent years more research attention has been focused on attaining the ZEH status and decarbonization, as discussed in\cite{yu2016ga,hannan2020review,ahmed2022assessment}.
Yet, a lingering question remains: is achieving net-zero energy for residential houses in specific regions a feasible reality? 
The feasibility of such an objective will be explored in this paper through optimization frameworks.

In the realm of battery-based power system planning and management, there is a widely accepted strategy of purchasing electricity during off-peak hours when it is cheaper and then utilizing it during peak times when prices are higher \cite{ru2012storage, khalilpour2016planning, khezri2022optimal}.
Much like the strategic purchase of electricity during off-peak times, the concept of sharing economy \cite{hu2019sharing,parag2016electricity,sundararajan2017sharing, henni2021sharing} has recently gained increased attention due to the fact that resources are becoming scarcer and more expensive than before. Owners are also trying to monetize their unused assets
(see, for example, the increasing trend of the second-hand apparel market in \cite{kim2022big}). In \cite{alleyne2023control}, sharing economy is defined as ``a disruptive paradigm based on replacing traditional notions of ownership with mechanisms based on sharing/on-demand access''.
Of course, this leads to a profound shift in the traditional notion of ownership. Although there are lots of benefits, it also raises numerous challenges \cite{crisostomi2020analytics}. In this kind of scheme, game theory \cite{marden2015game, chakraborty2018sharing} and multi-agent systems \cite{dorri2018multi,wong2019review} present themselves as the most suitable mechanisms to deal with potential cooperative and adversarial scenarios.

In this paper, we introduce optimization frameworks to determine the optimal size of PV panels and battery systems. Our objective is twofold: to achieve cost savings on future electricity bills and to realize the desired ZEH status across various scenarios. These scenarios encompass both an individual's self-supply approach and a collaborative investment strategy involving multiple users.
Although the problems we initially present are nonlinear, subsequent transformations allow us to relax them into linear programming (LP) problems, which are tractable. Additionally, our numerical examples leverage historical solar generation and household consumption data, rather than relying on synthetic data, enhancing the authenticity of our findings.
It is worth noting that we explored the optimal sizing of PV panels and batteries using optimization and game theory in a stochastic context in our earlier paper \cite{li2023stochastic}. A notable distinction between the two studies is the consideration of battery dynamics. While our prior research did not account for this aspect, the accumulation of power resulting from battery dynamics emerges as a pivotal factor for achieving net-zero energy in this paper.

\subsection{Contributions}
The contributions of this paper are listed as follows.
\begin{enumerate}
    \item We formulate a novel optimization problem to determine the optimal sizing of PV panels and batteries from an economic perspective while considering the achievement of the ZEH status. 
    \item In the proposed optimization problem, both the cases where users invest individually and a sharing-economy approach are considered, showing quantitatively the advantages that the latter provides in a cost-wise manner.
    \item By employing relaxation techniques, we transform the original nonconvex problem into an LP problem with equivalent solutions. Importantly,  the attainment of the ZEH status in our formulation can be realized by simply adding a constraint to the LP.
    \item We validate the effectiveness of our proposed formulation using real-world data sourced from Kitakyushu, Japan. The results illustrate that the inclusion of the ZEH constraint causes only a minimal increase in the optimal value of the LP. Thus, through this practical data, we demonstrate that attaining ZEH status does not significantly elevate costs, which implies that it is a feasible goal for residential houses in Japan.
\end{enumerate}

\subsection{Structure}
The paper is structured as follows: in Section \ref{sec:single}, the basic problem for complete self-supply is presented. Then, in Section \ref{sec:LP}, some new decision variables are introduced in such a way that the new optimization problem can be posed as an LP. Also, several properties of the resulting model are presented. Section \ref{sec:examples} shows the numerical examples of the proposed optimization problems and some discussions about the obtained results. Finally, in Section \ref{sec:conclusions}, the conclusions of the paper and the expected future work are presented.

\section{Optimal Sizing of PV Panels and Battery}
\label{sec:single}
In this section, we consider the optimal investment of PV panels and battery for an individual household assuming the availability of past solar generation and demand data.

The primary objective is to maintain power balance. When the electricity generated by PV panels exceeds the consumption and battery capacity, the surplus energy is returned to the grid.
Conversely, when the generated energy falls short, the battery discharges to meet the demand. When the battery reaches the point of depletion, a fuel cell is activated, generating the required energy through fuel combustion.

It is important to recognize the challenges this power balance issue presents. On one hand, the reverse power might lead to overvoltage issues in the feeders, potentially damaging the power grid
\cite{tonkoski2012impact,katiraei2011solar,baran2012accommodating}, which we would like to minimize when determining the optimal size for PV panels and the battery (note that this might be particularly important when dealing with microgrids \cite{akbari2022optimized}).
On the other hand, to promote the integration of renewable energies into the electrical system, feed-in tariffs (FiT) are commonly utilized. They provide an incentive to users by offering financial compensation for surplus power.
Both policies are taken into account, respectively, in our following formulation.

Given the factors previously stated, we should tackle in an economic manner (i.e. money loss or investment) the following terms in our optimization problem:
\begin{itemize}
    \item Investment in PV panels.
    \item Investment in the battery system.
    \item Reverse power penalty or FiT profit.
    \item Fuel cell generation cost.
\end{itemize}
First, we present in the next subsection the model of the battery dynamics, which is needed to compute the amount of reverse power penalty, FiT profit, and fuel cell generation cost.

\subsection{Battery dynamics}
Here, we consider the state-of-charge (SoC) dynamics as a bounded integrator, that is
\begin{equation}
\label{eq:first_model}
C_{k} =   \left[ C_k^{+} \right]^{\bar{C}}_{0}   :=
    \begin{cases}
        \underline{\alpha} \bar{C} & \text{if } C_{k}^{+} < \underline{\alpha} \bar{C} \\
        \overline{\alpha} \bar{C} & \text{if }  C_{k}^{+}  > \overline{\alpha} \bar{C} \\
        C_{k}^{+} & \text{else} 
    \end{cases}
\end{equation}
where $C_{k}$ is the SoC of the battery at time $k$, $\bar{C}$ is the maximum capacity of the battery, $\overline{\alpha} \in (0.5,1]$ and $\underline{\alpha} \in [0,0.5)$ are scalars defining the charging cycle of the battery and $C_{k}^{+}$ is the SoC of the battery at time $k$ before applying the bounds, i.e.
\begin{equation}
    C_{k}^{+} = \gamma \,C_{k-1} + a Y_{k-1} - X_{k-1},  
    \label{eq:battery_model}
\end{equation}
where $\gamma \in (0,1]$ is a scalar representing the losses (self-discharge, etc.) from one instant to another, $a$ is the total area of the PV panels, $Y_{k-1}$ is the energy production of the photovoltaic panels per $\text{m}^{2}$, i.e., $\text{kWh}/\text{m}^{2}$ at time instant $k-1$ and $X_{k-1}$ is the energy consumption in $\text{kWh}$ at time instant $k-1$.
It is easy to see that $C_{k}$ takes the value of $C_{k}^{+}$ when it is within the operating range of the battery and it is saturated when it is outside the range. For a discussion regarding the validity of the model, we refer the interested reader to the appendix.

Also, it is possible to consider that the charging rate is limited. This constraint limits the speed of the charging and discharging process
\begin{equation}
       |C_{k}-C_{k-1}| \leq R \,\bar{C} , 
\end{equation}
where $R \in (0,1]$. Note that this constraint can be written as two linear constraints, i.e.
\begin{equation}
    C_{k}-C_{k-1} \leq R \,\bar{C}, \quad
    C_{k-1}-C_{k} \leq R \,\bar{C}.
\end{equation}
From now on and for the sake of clarity, we assume that constraints referred to a certain time instant $k$ must always be fulfilled $\forall k=1,\dots, K$, and thus it will be omitted in the following optimization problems.
\subsection{Cost function}
Once the dynamics of the battery has been presented, the terms of the cost function can be shown:
\begin{itemize}
    \item Investment cost of the PV panels, which is quantified by $\Pi_{PV} a$. Note that $\Pi_{PV}$ is a constant value indicating the price per square meter $[\text{m}^{2}]$ and $a$ is a decision variable showing how much area $[\text{m}^{2}]$ should be bought. 
    \item Investment cost of the battery, which is quantified by $\Pi_{B} \bar{C}$. Here, the price is assumed to follow a linear function although other possibilities can be considered without loss of generality. Note that $\bar{C}$ is a decision variable indicating the size of the battery $[\text{kWh}]$ and  $\Pi_{B}$ corresponds to the price per $\text{kWh}$.
    \item Reverse power penalty or FiT profit, which is quantified by $\Pi_{R} \max ( C_{k}^{+}-\overline{\alpha}\bar{C},0)$. Here, 
    $\Pi_{R}$ is the cost/profit for each kWh injected into the main grid and $\max (\cdot, \cdot)$ is a function $\mathbb{R} \times \mathbb{R} \to \mathbb{R}$ that returns the maximum value of the two inputs scalars. 
    This cost/profit is equal to zero when the generated energy can be saved in the battery and $\Pi_{R} ( C^{+}_{k}-\overline{\alpha}\bar{C})$ elsewhere. Note that making $\Pi_{R}<0$ tackles the case of the FiT where the user receives compensation for his/her surplus energy.
    \item Fuel cell generation cost, which is quantified by $\Pi_{G} \max (\underline{\alpha}\bar{C} -C_{k}^{+},0)$. This cost appears when the PV generation and the stored energy in the battery is not enough to satisfy the demand, that is, its value is zero whenever $C_{k}^{+} \geq \underline{\alpha}\bar{C}$ and greater than zero when $C_{k}^{+} < \underline{\alpha}\bar{C}$. 
    In this paper, we assume that this additional energy is generated by means of a fuel cell and thus $\Pi_{G}$ represents the cost of generating each kWh in this fuel cell.
\end{itemize}

\subsection{Problem formulation}
In summary, we formulate the following optimization problem to determine the PV size $a$ and the capacity $\bar C$:
\begin{subequations}
\label{eq:original_after_remark}
\begin{align}
     \min_{a,\bar{C},\textbf{C},\textbf{C}^{+}} \quad & \Pi_{PV} a + \Pi_{B} \bar{C} + \sum_{k=1}^{K} \Pi_{R} \max ( C_{k}^{+}-\overline{\alpha} \bar{C},0) \nonumber \\&+ \sum_{k=1}^{K} \Pi_{G} \max (\underline{\alpha}\bar{C}-C_{k}^{+},0) \\
    \text{s.t.} \quad & C_{k} =  \left[ C_k^{+} \right]^{\overline{\alpha} \bar{C}}_{\underline{\alpha} \bar{C}}   
    \\
    & C_{k}^{+} = \gamma \,C_{k-1} + a Y_{k-1} - X_{k-1}  \\
    & C_{k}-C_{k-1} \leq R \,\bar{C} \label{eq:constraint_R_1} \\
    & C_{k-1}-C_{k} \leq R \,\bar{C} \label{eq:constraint_R_2} \\
    & 0 \leq a \leq a_{\max}  \label{eq:constraint_area}
    \\
    & C_{0} = \hat{C} \label{eq:constraint_initial}  \\
    &  a \sum_{k=0}^{K} Y_{k} \geq \sum_{k=0}^{K} X_{k}  \label{eq:ZEH_constraint}
\end{align}
\end{subequations}
where $K$ is the time horizon of the optimization problem and \textbf{C} and $\textbf{C}^+$ are the vectors of concatenated $C_k$ and $C_k^+$, that is
\begin{equation}
\textbf{C} = \{C_k\}_{k=1}^K, \quad \textbf{C}^+ = \{C_k^+\}_{k=1}^K.
\end{equation}
As we must take into account that the amount of PV panels that can be installed on the rooftop of each house is limited, we consider that a maximum of $a_{\max} \; \text{m}^2$ of panels can be bought, which leads to constraint \eqref{eq:constraint_area}. On the other hand, constraint \eqref{eq:constraint_initial} establishes the initial SoC of the battery to a certain scalar $\hat{C} \in [\underline{\alpha} \bar{C},\overline{\alpha}\bar{C}]$.
Finally, constraint \eqref{eq:ZEH_constraint}
forces users to achieve ZEH for the time period given by $K$. 
Note that the nonconvex saturation in \eqref{eq:first_model} and the $\max$ function make this optimization problem hard to solve in general. 

There is a slightly different variation of the previous formulation where users share the battery and the PV panels. Assuming the existence of $N$ users, the optimization problem would be as follows:
\begin{subequations}
\label{eq:original_afer_remark_community}
\begin{align}
     \min_{a_i,\bar{C},\textbf{C},\textbf{C}^{+}} \;\; & \Pi_{B} \bar{C} + \sum_{i=1}^{N} \Pi_{PV} a_{i}  + \sum_{k=1}^{K} \Pi_{R} \max ( C_{k}^{+}-\overline{\alpha}\bar{C},0) \nonumber \\&+ \sum_{k=1}^{K} \Pi_{G} \max (\underline{\alpha}\bar{C} -C_{k}^{+},0) \\
    \text{s.t.} \quad & C_{k} =  \left[ C_k^{+} \right]^{\overline{\alpha} \bar{C}}_{\underline{\alpha} \bar{C}}   
    \\
    & C_{k}^{+} = \gamma \,C_{k-1} + \sum_{i=1}^{N} \left( a_{i}  Y_{k-1,i} - X_{k-1,i} \right) \label{eq:power_balance_community} \\
    &  0 \leq a_{i} \leq a_{\max}  \quad \forall i=1,\hdots,N  \label{eq:ai_community} \\
        & \text{equations (\ref{eq:constraint_R_1}, \ref{eq:constraint_R_2}$,$\ref{eq:constraint_initial})} \label{eq:bunch} \\
        &  \sum_{i=1}^{N} a_{i} \sum_{k=0}^{K} Y_{k,i} \geq \sum_{i=1}^{N} \sum_{k=0}^{K} X_{k,i}  \label{eq:ZEH_community}
\end{align} 
\end{subequations}
where $a_{i}$ is the amount of PV panels for householder $i$ and $Y_{k,i}$ and $X_{k,i}$ are the generation and consumption at time instant $k$ for user $i$ respectively.
Note that, in this formulation,  we consider the group of users as a unique ``big user'' since the generation and consumption of each user is summed in the power balance equation \eqref{eq:power_balance_community}. 
\section{Relaxation to Linear Programming}
\label{sec:LP}
In what follows, we present how 
problems \eqref{eq:original_after_remark} and \eqref{eq:original_afer_remark_community} can be relaxed to LPs which can be easily solved. 

Firstly, we tackle the problem of removing the saturation constraints. For this purpose, we define new decision variables $\Phi \in \mathbb{R}^{K}$. Now, denote $\phi_{k}$ as the $k$-th element of $\Phi$. Then, these decision variables $\phi_{k}$ can be used to ``absorb'' the difference between $C_{k}^{+}$ and $C_{k}$. 
Thanks to the definition of these new decision variables $\phi_{k}$, the saturation constraints
\eqref{eq:first_model} become linear constraints, i.e.
\begin{equation}
\begin{array}{c}
     C_{k} + \phi_{k} = \gamma \,C_{k-1} + a Y_{k-1} - X_{k-1}, \\[\smallskipamount]
     \underline{\alpha} \bar{C} \leq C_{k} \leq \overline{\alpha} \bar{C}.
\end{array} 
\end{equation}
That is, when the battery is full, $\phi_{k}$ takes a positive value corresponding to the amount of energy that cannot be stored. On the other hand, when the battery becomes empty, $\phi_{k}$ takes a negative value corresponding to the amount of energy that must be generated using the fuel cell to avoid a blackout. Thus, it is clear that 
the two terms below are associated with
the cost of ``reverse power'' (or FiT) and ``fuel cell'', respectively:
\begin{equation}
\begin{array}{c}
   \Pi_{R} \max ( C_{k}^{+}- \overline{\alpha}\bar{C},0) \rightarrow \Pi_{R} \max ( \phi_{k},0), \\ [\smallskipamount]
   \Pi_{G} \max (\underline{\alpha}\bar{C} -C_{k}^{+},0) \rightarrow \Pi_{G} \max (- \phi_{k},0).
\end{array}  
\end{equation}
Under the reasonable assumption that $\Pi_{G} \geq 0$ and $\Pi_{R} \geq -\Pi_{G}$, then $\Pi_{R} \max ( \phi_{k},0) + \Pi_{G} \max (- \phi_{k},0)$ is a convex function and thus we have the following equivalence
\begin{equation}
\begin{array}{c}
\Pi_{R} \max ( \phi_{k},0) + \Pi_{G} \max (- \phi_{k},0) = \\[\smallskipamount]
\max (\Pi_{R} \phi_{k},- \Pi_{G}  \phi_{k}).
\end{array}  
    \label{eq:new_max_cost_function}
\end{equation}
Then, by substituting equation \eqref{eq:new_max_cost_function} in the cost function and taking into account the previous change in the constraints, the new optimization problem is
\begin{subequations}
    \begin{align}
\min_{a,\bar{C},\textbf{C},\Phi^{+},\Phi^{-}}  \hspace{-1mm} & \Pi_{PV} a + \Pi_{B} \bar{C} + \sum_{k=1}^{K} \max (\Pi_{R} \phi_{k},- \Pi_{G}  \phi_{k}) \\
\text{s.t.} \quad 
& 
C_{k} + \phi_{k} = \gamma \, C_{k-1} + a Y_{k-1} - X_{k-1}  \\
& \underline{\alpha} \bar{C} \leq C_{k} \leq \overline{\alpha} \bar{C}  \\
        & \text{equations (\ref{eq:constraint_R_1}$\sim$\ref{eq:ZEH_constraint})}
\end{align} 
\end{subequations}
where $\Phi$ is the vector of concatenated $\phi_k$, that is
$
\Phi = \{\phi_k\}_{k=1}^K.
$ 
Note that all constraints are linear but the cost function remains nonlinear. As this $\max$ function corresponds to a convex piecewise linear function, it is possible to transform this problem into an LP problem which can be solved efficiently. 
This can be done by adding some slack variables (see \cite[\S 4.3.1]{Boyd04}). 
Here, for the sake of clarity, we opt to define  $\phi_{k}$ as the difference between two nonnegative variables 
\begin{equation}
\phi_{k} = \phi_{k}^{+} - \phi_{k}^{-}, 
\end{equation}
where $\phi_{k}^{+} \geq 0$ and $\phi_{k}^{-} \geq 0$. Then, $\phi_{k}^{+}$ would correspond to the amount of reverse power and $\phi_{k}^{-}$ would correspond to the amount of energy generated by means of the fuel cell at time $k$. This is similar to the strategies used to pose an LP problem in the standard form (see \cite[\S 4.3]{Boyd04}). Note that, after this change, we have that
\begin{equation}
\max (\Pi_{R} \phi_{k},- \Pi_{G}  \phi_{k}) = \Pi_{R} \phi_{k}^{+} + \Pi_{G}  \phi_{k}^{-}
\end{equation}
and thus the problem can be written as
\begin{subequations}
\label{eq:num_example_individual}
\begin{align}
\min_{a,\bar{C},\textbf{C},\Phi^{+},\Phi^{-}}  \hspace{-1mm} & \Pi_{PV} a + \Pi_{B} \bar{C} + \sum_{k=1}^{K} \left( \Pi_{R} \phi_{k}^{+} + \Pi_{G}  \phi_{k}^{-} \right) \\
\text{s.t.} \quad & 
C_{k} + \phi_{k}^{+} - \phi_{k}^{-} = \gamma C_{k-1} + a Y_{k-1} - X_{k-1}  \\
& \underline{\alpha} \bar{C} \leq C_{k} \leq \overline{\alpha} \bar{C}  \\
& \phi_{k}^{+} \geq 0, \quad \phi_{k}^{-} \geq 0 \\
        & \text{equations (\ref{eq:constraint_R_1}$\sim$\ref{eq:ZEH_constraint})}
\end{align} 
\end{subequations}
where $\Phi^+$ and $\Phi^-$ are the vectors of concatenated $\phi_k^+$ and $\phi_k^-$, that is
\begin{equation}
\Phi^+ = \{\phi^+_k\}_{k=1}^K, \quad \Phi^- = \{\phi_k^-\}_{k=1}^K.
\end{equation}
Similarly, the community-based optimization problem can be easily obtained from the aforementioned individual optimization problem by considering the group of users as a unique ``big user''. Then, it is possible to write the following optimization problem:
\begin{subequations}
\label{eq:num_example_community}
\begin{align}
\min_{a_i,\bar{C},\textbf{C},\Phi^{+},\Phi^{-}}  \hspace{-3mm} & \Pi_{B} \bar{C} + \sum_{i=1}^{N} \Pi_{PV} a_i + \sum_{k=1}^{K} \left( \Pi_{R} \phi_{k}^{+} + \Pi_{G}  \phi_{k}^{-} \right) \\
\text{s.t.} \quad  
& 
C_{k} + \phi_{k}^{+} - \phi_{k}^{-} = \nonumber \\ & \qquad \qquad \gamma \, C_{k-1} + \sum_{i=1}^{N} \left( a_i Y_{k-1,i} - X_{k-1,i} \right) \\
& \underline{\alpha}\bar{C} \leq C_{k} \leq \overline{\alpha}\bar{C} \\
        & \text{equations (\ref{eq:ai_community}$\sim$\ref{eq:ZEH_community})}. 
\end{align} 
\end{subequations}

\subsection{Properties of the proposed model}
This subsection is dedicated to show the equivalence of the proposed model \eqref{eq:num_example_individual} with respect to the original model including the saturation constraints \eqref{eq:original_after_remark} and the conditions for this equivalence to hold. First, we show that the optimal cost of the original problem is an upper bound of the optimal cost of the proposed LP problem.
\begin{theorem}[Upper bound of the optimal cost]
The optimal cost of the LP problem \eqref{eq:num_example_individual} is less than or equal to the optimal cost of the original problem \eqref{eq:original_after_remark}.
\end{theorem}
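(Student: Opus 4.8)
The plan is to prove the inequality by a feasibility-mapping argument: I would show that every feasible point of the original problem \eqref{eq:original_after_remark} induces a feasible point of the LP \eqref{eq:num_example_individual} attaining exactly the same objective value. Since \eqref{eq:num_example_individual} is a minimization over a feasible set that contains the image of this map, its optimum can only be smaller, which yields the claim. No duality or deep structural property is needed; the work is entirely in exhibiting the right correspondence of variables.

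Concretely, let $(a,\bar{C},C_{k},C_{k}^{+})$ be any feasible solution of \eqref{eq:original_after_remark}, and in particular take an optimal one. I would keep $a$, $\bar{C}$, and the saturated state $C_{k}$ unchanged, and define the two nonnegative slacks by the complementary split
$$
\phi_{k}^{+} := \max(C_{k}^{+}-\overline{\alpha}\bar{C},0), \qquad \phi_{k}^{-} := \max(\underline{\alpha}\bar{C}-C_{k}^{+},0),
$$
so that $\phi_{k}^{+}\geq 0$, $\phi_{k}^{-}\geq 0$, and at most one is nonzero for each $k$. First I would verify feasibility. The key identity is $\phi_{k}^{+}-\phi_{k}^{-}=C_{k}^{+}-C_{k}$, obtained by inspecting the three branches of the saturation \eqref{eq:first_model}: on the interior branch both slacks vanish and $C_{k}=C_{k}^{+}$; on the upper branch $C_{k}=\overline{\alpha}\bar{C}$ and $\phi_{k}^{+}=C_{k}^{+}-\overline{\alpha}\bar{C}$; and symmetrically on the lower branch. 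Substituting this identity into the original recursion $C_{k}^{+}=\gamma C_{k-1}+aY_{k-1}-X_{k-1}$ reproduces exactly the LP equality $C_{k}+\phi_{k}^{+}-\phi_{k}^{-}=\gamma C_{k-1}+aY_{k-1}-X_{k-1}$; it is essential here that the recursion uses the saturated $C_{k-1}$, which is the same object in both formulations. The box constraint $\underline{\alpha}\bar{C}\leq C_{k}\leq\overline{\alpha}\bar{C}$ holds because $C_{k}$ is the saturated value, and the constraints \eqref{eq:constraint_R_1}, \eqref{eq:constraint_R_2}, \eqref{eq:constraint_area}, \eqref{eq:constraint_initial}, and \eqref{eq:ZEH_constraint} involve only $a$, $\bar{C}$, and $C_{k}$, which are left untouched, so they carry over verbatim.

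It then remains to match the costs. The investment terms $\Pi_{PV}a+\Pi_{B}\bar{C}$ coincide in both objectives, and the same three-branch check gives $\max(C_{k}^{+}-\overline{\alpha}\bar{C},0)=\phi_{k}^{+}$ and $\max(\underline{\alpha}\bar{C}-C_{k}^{+},0)=\phi_{k}^{-}$, so each per-step term satisfies $\Pi_{R}\max(C_{k}^{+}-\overline{\alpha}\bar{C},0)+\Pi_{G}\max(\underline{\alpha}\bar{C}-C_{k}^{+},0)=\Pi_{R}\phi_{k}^{+}+\Pi_{G}\phi_{k}^{-}$. Hence the constructed point is LP-feasible with identical objective value, and minimizing \eqref{eq:num_example_individual} gives a value no larger than this, i.e. no larger than the original optimum. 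The closest thing to an obstacle is the branch-by-branch bookkeeping that simultaneously establishes the slack identity and the cost equality while keeping the interpretation of $C_{k-1}$ as the shared saturated state consistent; beyond that the mapping is a direct transcription, reflecting that \eqref{eq:num_example_individual} is a genuine relaxation in this direction.
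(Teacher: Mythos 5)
Your proof is correct and is essentially the paper's argument viewed pointwise: the paper performs the same substitution $\phi_k = C_k^{+} - C_k$ (split into $\phi_k^{+},\phi_k^{-}$) at the level of the whole problem, rewrites \eqref{eq:original_after_remark} in the LP's variables, and observes that \eqref{eq:num_example_individual} is obtained by deleting the saturation constraint --- which is exactly the relaxation your feasibility map certifies point by point. If anything, your explicit complementary split $\phi_k^{+}=\max(C_k^{+}-\overline{\alpha}\bar{C},0)$, $\phi_k^{-}=\max(\underline{\alpha}\bar{C}-C_k^{+},0)$ makes the cost-matching step airtight, whereas the paper leaves the split $\phi_k=\phi_k^{+}-\phi_k^{-}$ unspecified.
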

\begin{proof}
It is possible to rewrite the original optimization problem \eqref{eq:original_after_remark} by making the following change of variables 
\begin{equation}
C_{k}^{+} = C_{k} + \phi_{k},
\end{equation}
where $\phi_{k}  \in \mathbb{R}$.
Then, the resulting optimization problem is
\begin{subequations}
    \begin{align}
     \min_{a,\bar{C},\textbf{C},\Phi} \; & \Pi_{PV} a + \Pi_{B} \bar{C} + \sum_{k=1}^{K} \Pi_{R} \max ( C_{k} + \phi_{k} - \overline{\alpha} \bar{C},0) \nonumber\\&+ \sum_{k=1}^{K} \Pi_{G} \max (\underline{\alpha}\bar{C} - C_{k} - \phi_{k},0) \\
    \text{s.t.} \quad & C_{k} =  \left[ C_{k} + \phi_{k} \right]^{\overline{{\alpha}}\bar{C}}_{\underline{\alpha}\bar{C}}   \\
    & C_{k} + \phi_{k} = \gamma \,C_{k-1} + a Y_{k-1} - X_{k-1}
    \\
& \text{equations (\ref{eq:constraint_R_1}$\sim$\ref{eq:ZEH_constraint})}.
\end{align}
\end{subequations}
Also, note that
\begin{equation}
\begin{array}{c}
    \max ( C_{k} + \phi_{k} - \overline{\alpha}\bar{C},0) =  \max (\phi_{k},0),  \\[\smallskipamount] \max (\underline{\alpha}\bar{C}-C_{k} - \phi_{k},0) = \max (- \phi_{k},0),
\end{array}  
\end{equation}
because of the constraint $C_{k} =  \left[ C_{k} + \phi_{k} \right]^{\overline{\alpha}\bar{C}}_{\underline{\alpha}\bar{C}}$. After making $\phi_{k} = \phi_{k}^{+}-\phi_{k}^{-}$, where $\phi_{k}^{+},\phi_{k}^{-}\geq 0$, the problem becomes
\begin{subequations}
    \begin{align}
     \min_{a,\bar{C},\textbf{C},\Phi^{+},\Phi^{-}} \; & \Pi_{PV} a + \Pi_{B} \bar{C} + \sum_{k=1}^{K} \Pi_{R} \max ( C_{k} + \phi_{k} - \overline{\alpha} \bar{C},0) \nonumber \\&+ \sum_{k=1}^{K} \Pi_{G} \max (\underline{\alpha}\bar{C} - C_{k} - \phi_{k},0) \\
    \text{s.t.} \quad & C_{k} =  \left[ C_{k} + \phi_{k}^{+} - \phi_{k}^{-} \right]^{\overline{{\alpha}}\bar{C}}_{\underline{\alpha}\bar{C}}   
    \\
    & C_{k} + \phi_{k}^{+} - \phi_{k}^{-} = \gamma \,C_{k-1} + a Y_{k-1} - X_{k-1} \\
    & \text{equations (\ref{eq:constraint_R_1}$\sim$\ref{eq:ZEH_constraint})}.
\end{align}
\end{subequations}
Finally, it is easy to see that the constraint $C_{k} =  \left[ C_{k} + \phi_{k}^{+} - \phi_{k}^{-} \right]^{\overline{\alpha}\bar{C}}_{\underline{\alpha}\bar{C}}$ implies $\underline{\alpha}\bar{C} \leq C_{k} \leq \overline{\alpha}\bar{C}$ too and thus we can add the second constraint to the optimization problem without changing the solution nor the optimal cost, that is
\begin{subequations}
    \begin{align}
\min_{a,\bar{C},\textbf{C},\Phi^{+},\Phi^{-}} \; & \Pi_{PV} a + \Pi_{B} \bar{C} + \sum_{k=1}^{K} \left( \Pi_{R} \phi_{k}^{+} +  \Pi_{G} \phi_{k}^{-} \right) \\
    \text{s.t.} \quad & C_{k} =  \left[ C_{k} + \phi_{k}^{+} - \phi_{k}^{-} \right]^{\overline{\alpha}\bar{C}}_{\underline{\alpha}\bar{C}}   
    \\
    & C_{k} + \phi_{k}^{+} - \phi_{k}^{-} = \gamma \, C_{k-1} + a Y_{k-1} - X_{k-1}   \\
    & \underline{\alpha}\bar{C} \leq C_{k} \leq \overline{\alpha}\bar{C}
    \\ & \phi_{k}^{+} \geq 0, \quad \phi_{k}^{-} \geq 0 \\
    & \text{equations (\ref{eq:constraint_R_1}$\sim$\ref{eq:ZEH_constraint})}.
\end{align}
\end{subequations}
At this point, it is clear that the proposed optimization problem in \eqref{eq:num_example_individual} corresponds to a less constrained version of the original optimization problem in \eqref{eq:original_after_remark} because the only difference is the absence of the constraint $C_{k} =  \left[ C_{k} + \phi_{k}^{+} - \phi_{k}^{-} \right]^{\overline{\alpha}\bar{C}}_{\underline{\alpha}\bar{C}}$ in the proposed LP  problem. Thus, the optimal cost of the proposed optimization problem will be less or equal to the cost of the original optimization problem. 
\end{proof}

In the following, it is shown that the additional degrees of freedom obtained because of the removal of the saturation constraints do not provide a better cost when $\Pi_{R} > 0$ and $\Pi_{G} > 0$ and thus the equality holds. In the next Lemma, we show that the optimal solution is not unique under certain conditions.

\begin{lemma}[Non-uniqueness of the optimal solution]
\label{lemma2}
Assume that $\Delta \phi^{+}$ or $\Delta \phi^{-}$ exists so that $\phi_{j}^{+} = \phi_{j}^{+^{*}} + \Delta \phi^{+}$, $\phi_{j-1}^{+} = \phi_{j-1}^{+^{*}} - \Delta \phi^{+}$ and $\phi_{j}^{-} = \phi_{j}^{-^{*}} + \Delta \phi^{-}$, $\phi_{j-1}^{-} = \phi_{j-1}^{-^{*}} - \Delta \phi^{-}$ for a certain $j$ while fulfilling the constraints $\underline{\alpha}\bar{C} \leq C_{k} \leq \overline{\alpha}\bar{C}$ and $\phi_{k}^{+} \geq 0$ or $\phi_{k}^{-} \geq 0$ for every $k$. Then, there exists an infinite number of optimal solutions for the optimization problem \eqref{eq:num_example_individual}.
\end{lemma}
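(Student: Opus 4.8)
The plan is to reduce \eqref{eq:num_example_individual} to a linear program over a polytope and then exhibit a one–dimensional family of optimizers. Concretely, I would fix the optimal area $a^{*}$ and capacity $\bar{C}^{*}$; with these frozen, the remaining problem in the variables $(C_{k},\phi_{k}^{+},\phi_{k}^{-})$ has the linear objective $\sum_{k=1}^{K}(\Pi_{R}\phi_{k}^{+}+\Pi_{G}\phi_{k}^{-})$ up to the additive constant $\Pi_{PV}a^{*}+\Pi_{B}\bar{C}^{*}$, minimized over the polytope cut out by the equality constraints, the box $\underline{\alpha}\bar{C}\le C_{k}\le\overline{\alpha}\bar{C}$, the sign constraints, the charging--rate inequalities \eqref{eq:constraint_R_1}--\eqref{eq:constraint_R_2}, and the remaining constraints \eqref{eq:constraint_area}, \eqref{eq:constraint_initial}, \eqref{eq:ZEH_constraint}. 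For any such LP the set of optimizers is a face of that polytope, so it suffices to produce a single nonzero feasible direction along which the cost is constant: the optimal face then has dimension at least one and contains an entire segment, hence infinitely many optimizers.

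First I would take the given optimum $x^{*}=(a^{*},\bar{C}^{*},C_{k}^{*},\phi_{k}^{+*},\phi_{k}^{-*})$ and form the admissible increment supplied by the hypothesis, $\phi_{j}^{+}=\phi_{j}^{+*}+\Delta\phi^{+}$ and $\phi_{j}^{-}=\phi_{j}^{-*}+\Delta\phi^{-}$ with $(\Delta\phi^{+},\Delta\phi^{-})\neq 0$. Because the equality constraint couples the controls to the state only through the combination $C_{k}+\phi_{k}^{+}-\phi_{k}^{-}=\gamma C_{k-1}+aY_{k-1}-X_{k-1}$, this increment must be propagated along the battery recursion: changing $\phi_{j}^{+}-\phi_{j}^{-}$ shifts $C_{k}$ for every $k\ge j$ by $\gamma^{\,k-j}(\Delta\phi^{-}-\Delta\phi^{+})$, while $a$, $\bar{C}$ and all $\phi_{k}^{\pm}$ with $k\neq j$ stay fixed. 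The hypothesis is exactly the assertion that this shifted trajectory still satisfies the box constraints $\underline{\alpha}\bar{C}\le C_{k}\le\overline{\alpha}\bar{C}$ and the sign constraints $\phi_{k}^{+}\ge 0,\ \phi_{k}^{-}\ge 0$ for every $k$; since the rate inequalities \eqref{eq:constraint_R_1}--\eqref{eq:constraint_R_2} are linear and the constraints \eqref{eq:constraint_area}, \eqref{eq:constraint_initial}, \eqref{eq:ZEH_constraint} involve neither $\phi_{k}^{\pm}$ nor the shifted $C_{k}$, this defines a genuine second feasible point $x^{*}+d$ of \eqref{eq:num_example_individual}.

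The decisive step is to certify that this feasible neighbour is again optimal, i.e.\ that the objective is unchanged along $d$. As only $\phi_{j}^{+}$ and $\phi_{j}^{-}$ among the controls move, and the states $C_{k}$ do not enter the cost at all, the objective varies by precisely $\Pi_{R}\Delta\phi^{+}+\Pi_{G}\Delta\phi^{-}$; optimality of $x^{*}$ already forces this quantity to be nonnegative, and the admissible direction singled out by the hypothesis is the value--preserving one for which it equals zero, which with $\Pi_{R},\Pi_{G}>0$ compels $\Delta\phi^{+}$ and $\Delta\phi^{-}$ to carry opposite signs. Granting cost--neutrality, $x^{*}+d$ attains the optimal value, so by convexity of the feasible set and linearity of the objective every point $x^{*}+t\,d$ with $t\in[0,1]$ is feasible and attains the same cost; these points are pairwise distinct, so the segment furnishes infinitely many optimal solutions, and the identical argument applies to the sharing--economy problem \eqref{eq:num_example_community}.

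I expect the main obstacle to be the feasibility bookkeeping for the propagated trajectory: one must verify that the geometric shift $\gamma^{\,k-j}(\Delta\phi^{-}-\Delta\phi^{+})$ of $C_{k}$ over all $k\ge j$ simultaneously respects both the lower and upper box bounds and the two charging--rate inequalities \eqref{eq:constraint_R_1}--\eqref{eq:constraint_R_2}, and to pin down cost--neutrality rigorously, for instance by arguing that on an interior portion of the trajectory the opposite direction $-d$ is likewise feasible, so that optimality of $x^{*}$ yields $\Pi_{R}\Delta\phi^{+}+\Pi_{G}\Delta\phi^{-}\le 0$ as well and hence equality. Everything else is the routine ``two optima imply a segment of optima'' conclusion for a linear program.
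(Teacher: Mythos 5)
Your overall framework (freeze $a^{*},\bar{C}^{*}$, view the remaining problem as an LP over a polytope, exhibit a nonzero feasible direction of constant cost, conclude that a whole segment of optima exists) is sound, but the decisive step is missing, and it is precisely the one you flag yourself: cost-neutrality of the direction. Under your reading of the hypothesis --- a perturbation of $\phi_{j}^{+},\phi_{j}^{-}$ at a \emph{single} slot $j$, propagated forward as $C_{k}\mapsto C_{k}^{*}+\gamma^{\,k-j}(\Delta\phi^{-}-\Delta\phi^{+})$ for all $k\ge j$ --- the objective changes by $\Pi_{R}\Delta\phi^{+}+\Pi_{G}\Delta\phi^{-}$, which is generically nonzero. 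For instance, a feasible increase of $\phi_{j}^{+}$ alone satisfies the hypothesis as literally stated, yet with $\Pi_{R}>0$ it strictly increases the cost and produces a merely feasible, suboptimal point; in such an instance the optimum can perfectly well remain unique, so under this reading the implication you are trying to prove does not hold. Your proposed repair --- deduce $\Pi_{R}\Delta\phi^{+}+\Pi_{G}\Delta\phi^{-}=0$ from feasibility of the opposite direction $-d$ --- imports an assumption that appears nowhere in the hypothesis, so ``granting cost-neutrality'' is not a bookkeeping detail but the actual content of the lemma left unproven.

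The paper's proof sidesteps this entirely by using a different perturbation: it \emph{transfers} reverse power between two adjacent slots, $\phi_{j-1}^{+}\mapsto\phi_{j-1}^{+^{*}}-\Delta\phi^{+}$ and $\phi_{j}^{+}\mapsto\phi_{j}^{+^{*}}+\Delta\phi^{+}$, leaving $\phi^{-}$ untouched. Then only $C_{j-1}$ moves (to $C_{j-1}^{*}+\Delta\phi^{+}$), the trajectory rejoins $C_{j}=C_{j}^{*}$ from slot $j$ onward, and the sum $\sum_{k}\phi_{k}^{+}$ is conserved by construction, so the cost change is $\Pi_{R}(\Delta\phi^{+}-\Delta\phi^{+})=0$ identically --- no appeal to optimality conditions, and no feasibility requirement on a reversed direction. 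This paired-transfer perturbation is also exactly the object whose existence Lemmas \ref{lemma3} and \ref{lemma4} establish (battery saturating at slot $j$ with slack at slot $j-1$), which is how the lemma is meant to be read despite its loose phrasing. If you replace your single-slot direction $d$ by this paired transfer, cost invariance becomes automatic, and your face/segment argument then cleanly delivers the ``infinitely many solutions'' conclusion (which the paper itself leaves implicit, since any scaling $t\,\Delta\phi^{+}$, $t\in(0,1]$, remains feasible by linearity of the constraints).
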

\begin{proof}
For simplicity, we tackle in what follows the case of $\Delta \phi^{+}$ as a similar reasoning can be done for $\Delta \phi^{-}$.
Denote $a^{*}$, $\bar{C}^{*}$, $\textbf{C}^{*}$, $\Phi^{+^{*}}$ and $\Phi^{-^{*}}$ an optimal solution of the optimization problem \eqref{eq:num_example_individual}. Assume the existence of a certain $\Delta \phi^{+}$ and $j$ so that 
\begin{equation}
\label{eq:vector_phi:vector_C}
\begin{array}{c}
        \Phi^{+}_{\Delta} = [
      \phi^{+^{*}}_{1},     \dots,  \phi^{+^{*}}_{j-1} - \Delta \phi^{+} , \phi^{+^{*}}_{j} + \Delta \phi^{+} ,         \dots , \phi^{+^{*}}_{K}]^{\top}, \\[\smallskipamount]
       \textbf{C}_{\Delta} = [C_{0}^{*}, \dots, C_{j-1}^{*} + \Delta \phi^{+}, C_{j}^{*}, \dots , C_{K}^{*} ]^{\top}   ,
\end{array}  
\end{equation}
and the constraints $\underline{\alpha}\bar{C} \leq C_{k} \leq \overline{\alpha}\bar{C}$ and $\phi_{k}^{+} \geq 0$ are still satisfied for every $k$. Note that the SoC of the battery only changes at $j-1$ since 
\begin{equation}
   C_{j-1} + \phi_{j-1}^{+^{*}} - \Delta \phi^{+} -\phi_{j-1}^{-^{*}}  = C_{j-2}^{*} + a^{*} Y_{j-2} - X_{j-2} . 
\end{equation}
Note that 
\begin{equation}
    C_{j-1}^{*} = C_{j-2}^{*} + a^{*} Y_{j-2} - X_{j-2} -\phi_{j-1}^{+^{*}} + \phi_{j-1}^{-^{*}}
\end{equation}
and thus
\begin{equation}
C_{j-1} = C_{j-1}^{*} + \Delta \phi^{+}.
\end{equation}
Once the value of $C_{j-1}$ is known, we can compute the value of $C_{j}$
\begin{gather}
    C_{j} + \phi_{j}^{+} - \phi_{j}^{-^{*}}  = C_{j-1} + a^{*} Y_{j-1} - X_{j-1} \nonumber
\\
C_{j} + \phi_{j}^{+^{*}} +\Delta \phi^{+} - \phi_{j}^{-^{*}}  = C_{j-1}^{*} + \Delta \phi^{+} + a^{*} Y_{j-1} - X_{j-1} \nonumber  \\
C_{j} + \phi_{j}^{+^{*}} - \phi_{j}^{-^{*}}  = C_{j-1}^{*} + a^{*} Y_{j-1} - X_{j-1} .
\end{gather}
Hence, $C_{j} = C_{j}^{*}.$
Denoting sum$(\cdot)$ as the sum of the elements of a vector, then the new cost can be compared with respect to the optimal cost:
\begin{align}
    \Delta V =& \Pi_{PV} a^{*} + \Pi_{B} \bar{C}^{*} +  \Pi_{R} \text{sum}\left(\Phi^{+}_{\Delta}\right) + \sum_{k=1}^{K} \Pi_{G}  \phi_{k}^{-^{*}} \nonumber \\
    -& \Pi_{PV} a^{*} - \Pi_{B} \bar{C}^{*} - \sum_{k=1}^{K} \Pi_{R} \phi_{k}^{+^{*}} - \Pi_{G}  \phi_{k}^{-^{*}} \nonumber \\
    =& \Pi_{R} (\Delta \phi^{+} - \Delta \phi^{+} ) + \sum_{k=1}^{K} \Pi_{R} \phi_{k}^{+^{*}} - \sum_{k=1}^{K} \Pi_{R} \phi_{k}^{+^{*}} = 0. 
\end{align}
Thus, the variables in \eqref{eq:vector_phi:vector_C} for any feasible $\Delta\phi^+$ together with $a^*$, $\bar{C}^*$, $\Phi^-$ make up the set of optimal solutions of \eqref{eq:num_example_individual}
\end{proof}
The following Theorem follows from the previous results.
\begin{theorem}[Infinite solutions]
\label{theorem2}
Consider the optimization problem in \eqref{eq:num_example_individual}. If one of the below proposition holds:
\begin{itemize}
    \item There is at least one time slot where the battery is not empty, the fuel cell is off, the battery reaches full capacity in the subsequent time slot and the charging inequality constraint is not active (existence of feasible $\Delta \phi^{+}$).
    \item There is at least one time slot where the battery is not full, no reverse power is being returned to the grid, the battery is completely empty at next time instant and the charging inequality constraint is not active (existence of feasible $\Delta \phi^{-}$).
\end{itemize}
Then, it is possible to find an infinite number of optimal solutions.
\end{theorem}
\begin{proof}
The conditions appearing in the first proposition corresponds to the following: $C_{j}^{*} = \overline{\alpha} \bar{C}$, $\underline{\alpha}\bar{C} < C_{j-1}^{*} \leq \overline{\alpha}\bar{C}$, $\phi_{j-1}^{-^{*}}=0$, $C_{k}-C_{k-1} < R \,\bar{C}$ and $C_{k-1}-C_{k} < R \,\bar{C} $. Then $\phi_{j}^{-^{*}}=0$ and $\phi_{j}^{+^{*}}  = C_{j-1}^{*} + a^{*} Y_{j-1} - X_{j-1} - \bar{C} \geq 0$. From these conditions, it is easy to see that it is possible to find a certain $\Delta \phi^{+} < 0$ so that the constraints $\underline{\alpha}\bar{C} \leq C_{k} \leq \overline{\alpha}\bar{C} $ and $\phi_{k}^{+} \geq 0$ are still fulfilled for every $k$. Due to the results of Lemma \ref{lemma2}, this proves that an infinite number of optimal solutions can be found.
On the other hand, the conditions of the second proposition are the following: $C_{j}^{*} = \underline{\alpha}\bar{C}$, $\underline{\alpha}\bar{C} \leq C_{j-1}^{*} < \overline{\alpha}\bar{C}$, $\phi_{j-1}^{+^{*}}=0$, $C_{k}-C_{k-1} < R \,\bar{C}$ and $C_{k-1}-C_{k} < R \,\bar{C}$. Then $\phi_{j}^{+^{*}}=0$ and $\phi_{j}^{-^{*}}  = X_{j-1} - C_{j-1}^{*} - a^{*} Y_{j-1}  \geq 0$. Similarly, from the above conditions, it can be seen that it is possible to find a certain $\Delta \phi^{-} < 0$ so that the constraints $\underline{\alpha}\bar{C} \leq C_{k} \leq \overline{\alpha}\bar{C} $ and $\phi_{k}^{-} \geq 0$ are still fulfilled for every $k$. As in the previous case, for this situation, Lemma \ref{lemma2} states that an infinite number of optimal solutions can be found.
\end{proof}
\begin{corollary}
        By looking at the results of Theorem \ref{theorem2}, it is clear that all possible trajectories of the SoC of the battery which can be obtained by altering $\textbf{C}_{\Delta}$, $\Phi^{+}_{\Delta}$ and $\Phi^{-}_{\Delta}$ attain the same optimal cost when $\Pi_{R} > 0$ and $\Pi_{G} > 0$. In other words,  when $\Pi_{R} > 0$ and $\Pi_{G} > 0$, the optimal cost of \eqref{eq:original_after_remark} and \eqref{eq:num_example_individual} are the same.
\end{corollary}
 On the other hand, when considering FiT (i.e. $\Pi_{R} < 0$), the value of the optimal cost can be smaller than the cost obtained with the original optimization problem. This is due to the fact that $\Phi^{+}$ and $\Phi^{-}$ can be interpreted as control variables allowing us to discharge the battery or generate energy in the fuel cell at any time, not only when it is strictly needed.

\section{Case Study in Kitakyushu, Japan}
\label{sec:examples}
In this section, we conduct simulation studies using real data obtained from Jono, a neighborhood located in Kitakyushu, Japan. The available data corresponds to the time period from April 1, 2021 to February 28, 2022. This data includes both the consumption and solar generation for a total of 134 households every 30 minutes during this period of time (11 months). Different scenarios (each one with its own set of parameters) are presented to show how the proposed optimization problem performs and to discuss the obtained results.

The problem to be solved for the individual case corresponds to the problem in \eqref{eq:num_example_individual}
where the last constraint can be considered or not depending if we want to enforce ZEH.
Similarly, the optimization problem to be solved for the community-based approach corresponds to the optimization problem in \eqref{eq:num_example_community} where the last constraint is optional.

Regarding the value of the parameters, we consider a depth of discharge of $0.9$, that is $\underline{\alpha}=0.05$ and $\overline{\alpha} = 0.95$. The charging rate is limited to the $50\%$ of the total capacity of the battery ($R = 0.5$), i.e. the battery can be fully charged in 2 time steps (1 hour). 
On the other hand, $\gamma$ is chosen to be $\gamma=0.99998$ and the initial condition is fixed to $C_{0} = \underline{\alpha} \bar{C}$.
Finally, $\Pi_{PV} = 1000 \; \yen / \text{m}^2$, $\Pi_{B} = 4500 \; \yen / \text{kWh}$, $\Pi_{G} = 30 \; \yen /\text{kWh}$.  Note that $\Pi_{PV}$ and $\Pi_{B}$ are chosen so that the cost of the battery and the solar panels is amortized over a span of 10 years, which is the estimated life of $\text{LiFePO}_{4}$ batteries of these characteristics according to most manufacturers. As $\Pi_{R}$ changes in every proposed scenario, we present its value at the beginning of each subsection.

\subsection{Scenario 1. Penalized reverse power}
Here, we consider that the energy that cannot be stored in the battery penalizes the cost function of the optimization problem, that is $\Pi_{R} = 10 \; \yen /\text{kWh}$.

The results are shown in Fig.~\ref{fig:aC_4500_10}, \ref{fig:histogram_4500_10},  \ref{fig:cost_4500_10} and Table~\ref{table:4500_rev10}. Fig.~\ref{fig:aC_4500_10} shows the amount of PV panels in $\text{m}^{2}$ and battery size in $\text{kWh}$ for each user. Blue dots correspond to the problem without the ZEH constraint whereas red dots correspond to the problem including the ZEH constraint. It is clear that, due to the term penalizing the reverse flow, the investment in PV panels and batteries can be considered small in comparison to the problem including the ZEH constraint. In particular, for the individual case, the increase in investment for the PV panels is 34.02\% on average and 24.15\% on average for the investment on the battery system. For the community-based approach, we obtain that the investment in PV panels increases by 27.53\% and the investment in batteries increases by 12.49\%. Similar results can be seen in Fig.~\ref{fig:histogram_4500_10}, where histograms of the investment of PV panels (on the top) and battery (on the bottom) are shown. 
In addition, average investments of the individual and community approaches are presented. It can be observed that allowing energy sharing within the community incentivizes investment in batteries, while simultaneously decreasing the average need for individual PV panels. This leads to a more energy-efficient management by reducing waste from over-investment.
\begin{figure}[tbh]
    \centering
    \includegraphics[width=0.48\textwidth]{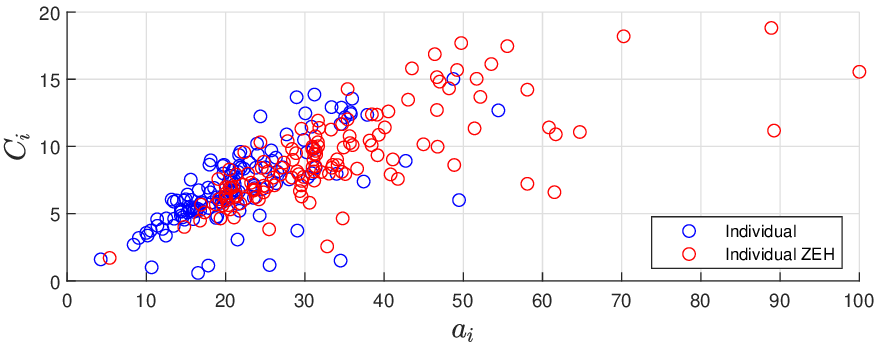}
    \caption{PV panels and batteries investments with and without ZEH constraints for the penalized reverse power scenario}. 
    \label{fig:aC_4500_10}
\end{figure}
\begin{figure}[tbh]
    \centering
    \includegraphics[width=0.48\textwidth]{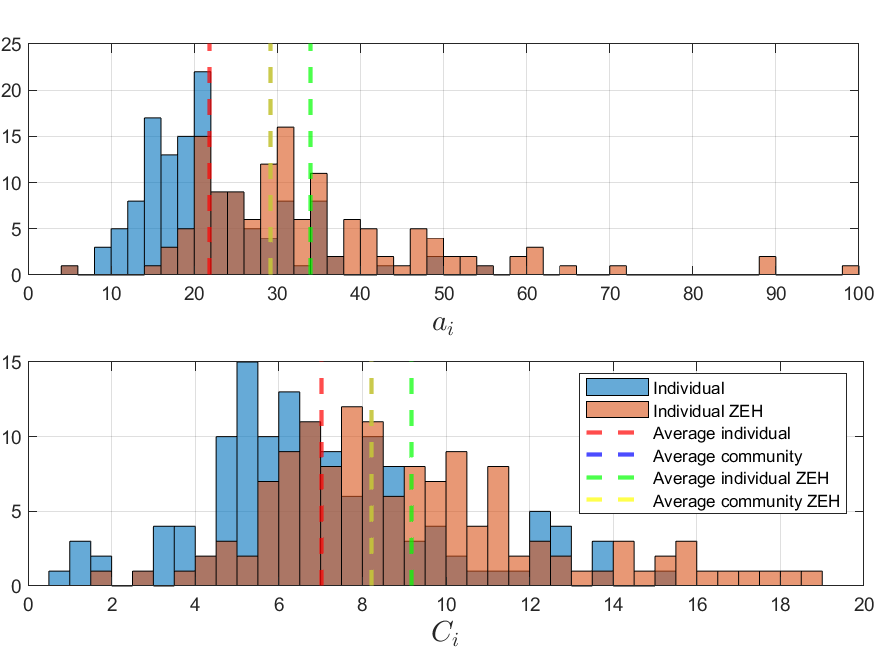}
    \caption{Histograms of investments on PV panels (on the top) and batteries (on the bottom) for the penalized reverse power scenario}.
    \label{fig:histogram_4500_10}
\end{figure}
Fig.~\ref{fig:cost_4500_10} shows the total savings after optimization with respect to the  cost before optimization, i.e. when no PV panels nor batteries are installed. As it can be expected, for the problem without the ZEH constraint, the optimized costs are smaller than the previous costs for every user. However, some users face larger costs than before optimization (those with negative savings) when considering the ZEH constraint. This can be seen as a consequence of a low capability of generating solar energy for some households along with the reverse power penalty, which makes very difficult to attain ZEH.
\begin{figure*}[thb]
    \centering
    \includegraphics[width=0.95\textwidth]{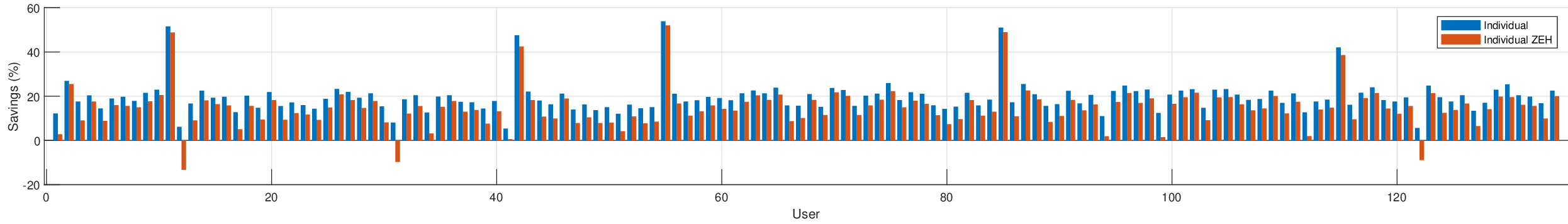}
    \caption{Improvement of the cost of each user with respect to the case where batteries and PV panels are not installed for the penalized reverse power scenario.}
    \label{fig:cost_4500_10}
\end{figure*}
Finally, Table \ref{table:4500_rev10} shows the summarized results for the proposed set of parameters. Note that ``Ind.'' refers to ``Individual'', ``Ind. ZEH'' corresponds to the individual optimization problem including the ZEH constraint, ``Sharing'' refers to the  investment approach as a group of users, ``Sharing ZEH'' corresponds to the same investment approach including the ZEH constraint, ``Av. PV'' refers to the average amount of invested PV among users, ``Av. battery'' stands for the average battery size among users, ``ZEH (\%)'' refers to the percentage of users achieving ZEH in the original optimization problem and ``savings'' stands for the improvement of the cost with respect to the case where no batteries nor PV panels are installed. 

It can be seen that neither the individual users nor the community achieve ZEH in the original problem. This happens due to the penalty in the reverse power flow which encourages users not to invest a lot on PV panels. As attaining ZEH is not optimal cost-wise, users do not invest beyond needed in order to attain ZEH. Also, it seems that there are some users who cannot achieve ZEH on their own. This might be because certain buildings are oriented towards a direction where solar radiation is very low and thus $a_{\text{max}}$ $\text{m}^{2}$ worth of solar panels are not enough in order to satisfy the ZEH constraint, leading to infeasibility. Also, note that it is always easier to achieve ZEH as a community instead of individually, i.e. a smaller amount of PV panels is required.
 \begin{table}[htbp]
     \caption{Summarized results for the penalized reverse power scenario.}
 \begin{center}
 \begin{tabular}{|c|c|c|c|c|c|}
 \hline
  &  Ind. & Ind. ZEH & Sharing & Sharing ZEH  \\
 \hline \hline
 Av. PV ($\text{m}^{2}$) & 21.90 & 33.97 & 20.65 & 28.33   \\ \hline
 Av. battery (kWh) & 7.015 & 9.164 & 8.022 & 9.025   \\  \hline
 ZEH (\%) & 0 & 99.25 & 0 & 100 \\ \hline
 Savings (\%) & 19.64 & 14.72 & 25.29 & 21.90 \\ \hline
 \end{tabular}
 \end{center}
   \label{table:4500_rev10}
 \end{table} 

\subsection{Scenario 2. Non-penalized reverse power}
Here, it is considered that the amount of electricity injected back to the grid does not affect the value of the cost function, that is $\Pi_{R} = 0 \; \yen /\text{kWh}$. It can be considered as the intermediate case between the penalized reverse power case and the FiT case.
\begin{figure}[tbh]
    \centering
    \includegraphics[width=0.48\textwidth]{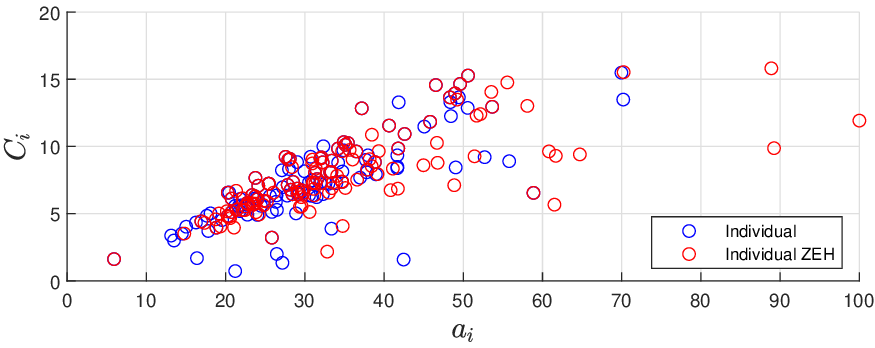}
    \caption{PV panels and batteries investments with and without ZEH constraints for the non-penalized reverse power scenario.}
    \label{fig:aC_4500_0}
\end{figure}
\begin{figure}[tbh]
    \centering
    \includegraphics[width=0.48\textwidth]{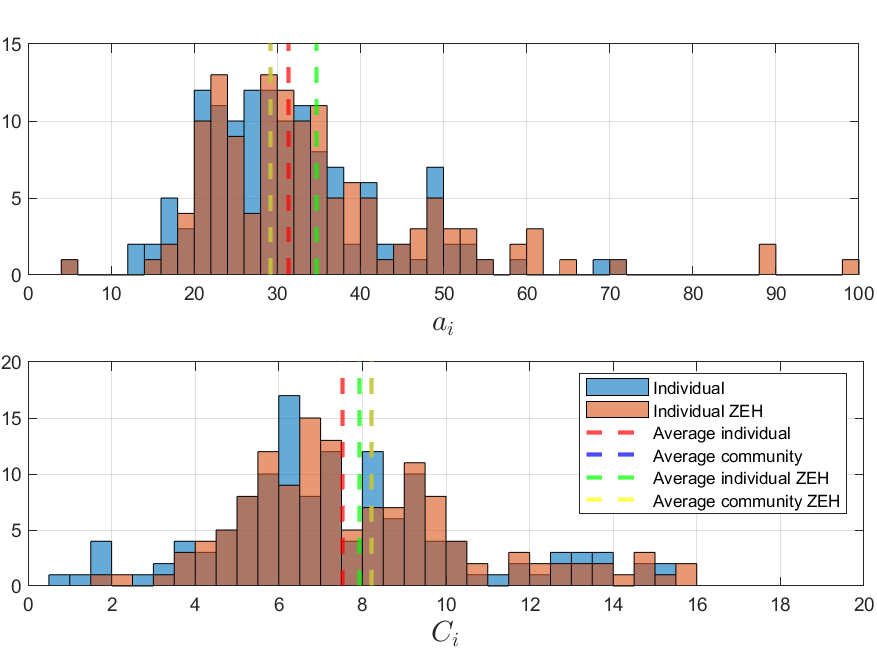}
    \caption{Histograms of investments on PV panels (on the top) and batteries (on the bottom) for the non-penalized reverse power scenario.}
    \label{fig:histogram_4500_0}
\end{figure}
The results are shown in Fig.~\ref{fig:aC_4500_0}, \ref{fig:histogram_4500_0}, \ref{fig:cost_4500_0} and Table \ref{table:4500_rev0}. In Fig.~\ref{fig:aC_4500_0}, the investment in PV panels and batteries is shown for each user. It is easy to see that the investment is larger in comparison to that of Scenario 1, i.e. 30.52\% for the PV investment and 7.290\% for the battery investment on average in the individual case whereas, for the community case, the PV investment increases by 29.46\% and the battery investment increases by 3.820\%. This happens because we are not penalizing the reverse power flow and thus more PV panels can be installed without increasing the costs due to an excessive generation. Also, it can be seen that, in order to attain ZEH, many individual users must increase the amount of PV panels, but it is not as pronounced as in Scenario 1. Again, The ZEH constraint requires an expansion of PV area by 7.487\% and a purchase of 5.493\% additional battery storage capacity on average for the individual case. As the community attains ZEH naturally, there is no change in adding the ZEH constraint to the problem. Similarly, Fig.~\ref{fig:histogram_4500_0} shows the investment results in a histogram manner. It can be seen that the distribution of the PV investment is slightly pushed towards the right direction in comparison with Fig.~\ref{fig:histogram_4500_10}, which makes sense due to the lack of penalty in the reverse flow. However, the distribution of the investment related to the battery does not change substantially. 
\begin{figure*}[thb]
    \centering
    \includegraphics[width=0.95\textwidth]{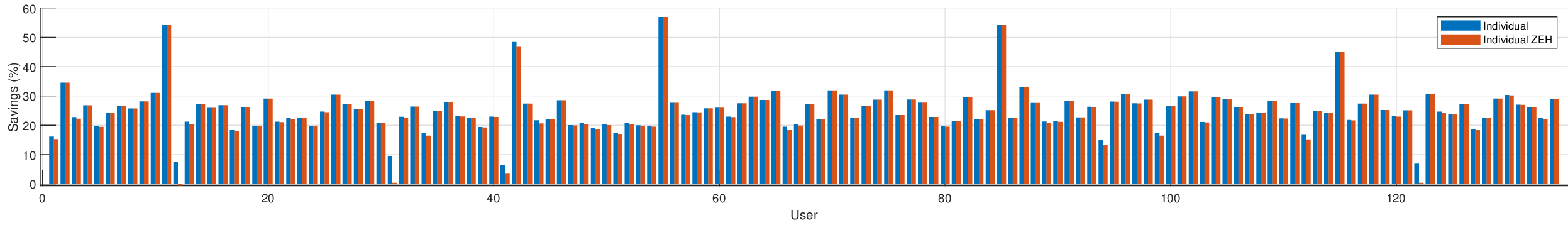}
    \caption{Improvement of the cost of each user with respect to the case where batteries and PV panels are not installed for the non-penalized reverse power scenario.}
    \label{fig:cost_4500_0}
\end{figure*}
Finally, Fig.~\ref{fig:cost_4500_0} shows the individual costs for the proposed set of parameters. In general, it can be seen that the savings are larger than the savings depicted by Fig.~\ref{fig:cost_4500_10}, which is obtained from the set of parameters in Scenario 1. Also, it is easy to see that the difference in the savings when including the ZEH constraint is really small, leading to the conclusion that attaining ZEH is not as hard as we might think. However, there are some exceptions that should be studied on a case-by-case basis.

Again, Table \ref{table:4500_rev0} shows the summarized results for the proposed  set of parameters. It can be seen that a high percentage of users are still not achieving ZEH naturally. 
However, in the sharing approach, ZEH is attained in the original investment problem. This means that even though achieving ZEH individually can be tough, it becomes easier when considering a sharing-based approach.
 \begin{table}[htbp]
     \caption{Summarized results for the non-penalized reverse power scenario.}
 \begin{center}
 \begin{tabular}{|c|c|c|c|c|c|}
 \hline
  &  Ind. & Ind. ZEH & Sharing & Sharing ZEH  \\
 \hline \hline
 Av. PV ($\text{m}^{2}$) & 31.36 & 34.76 & 29.19 & 29.19  \\ \hline
 Av. battery (kWh) & 7.524 & 7.937 & 8.206 & 8.206  \\  \hline
 ZEH (\%) & 43.28 & 99.25 & 100 & 100  \\ \hline
 Savings (\%) & 25.35 & 25.00 & 31.27 & 31.27  \\ \hline
 \end{tabular}
 \end{center}
   \label{table:4500_rev0}
 \end{table} 

\subsection{Scenario 3. Feed-in tariffs}
In this case, users can make a profit by returning the surplus power generated by means of the PV panels to the grid, that is $\Pi_{R} = -5 \; \yen /\text{kWh}$.

The results are shown in Fig.~\ref{fig:aC_4500_-5}, \ref{fig:histogram_4500_-5}, \ref{fig:cost_4500_-5} and Table~\ref{table:4500_rev-5}. Fig.~\ref{fig:aC_4500_-5} shows the investment for the PV panels and batteries for each individual user. It can be seen that the solution to the original optimization problem is almost the same as the solution to the problem including ZEH enforcing constraint, which means that many users attain ZEH in the original optimization problem. Particularly, it is only needed to increase the PV investment by 1.719\% and the battery investment by 2.426\% on average.
Also, as can be expected, the amount of investment in PV is significantly larger than before due to the fact that now it is possible to make a profit by returning this surplus energy to the grid. This can be seen clearly in Fig.~\ref{fig:histogram_4500_-5}, where the average value of the PV investment is over twice in comparison to Scenario 2, and the distribution is more flattened and pushed to the right. On the other hand, the distribution of the battery size is almost identical to the previous two scenarios.
\begin{figure}[tbh]
    \centering
    \includegraphics[width=0.48\textwidth]{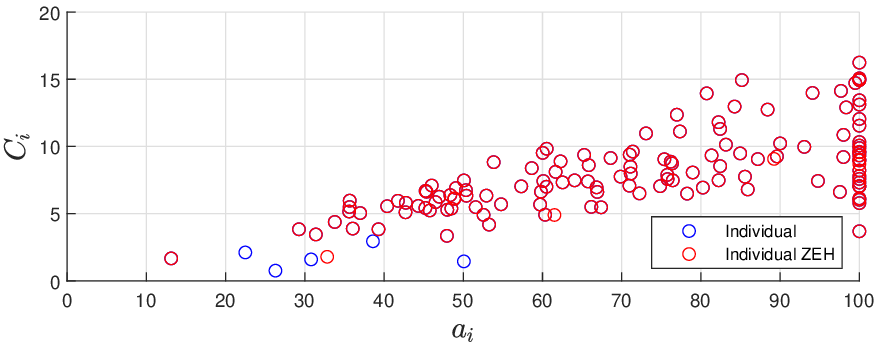}
    \caption{PV panels and batteries investments with and without ZEH constraints for the FiT scenario.}
    \label{fig:aC_4500_-5}
\end{figure}

\begin{figure}[tbh]
    \centering
    \includegraphics[width=0.48\textwidth]{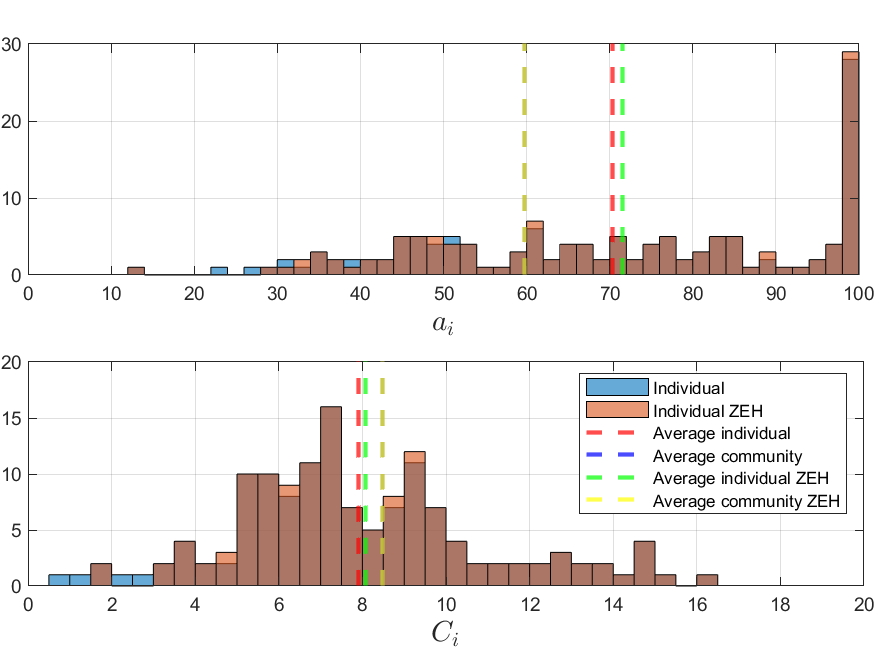}
    \caption{Histograms of investments on PV panels (on the top) and batteries (on the bottom) for the FiT scenario.}
    \label{fig:histogram_4500_-5}
\end{figure}




\begin{figure*}[tbh]
    \centering
    \includegraphics[width=0.95\textwidth]{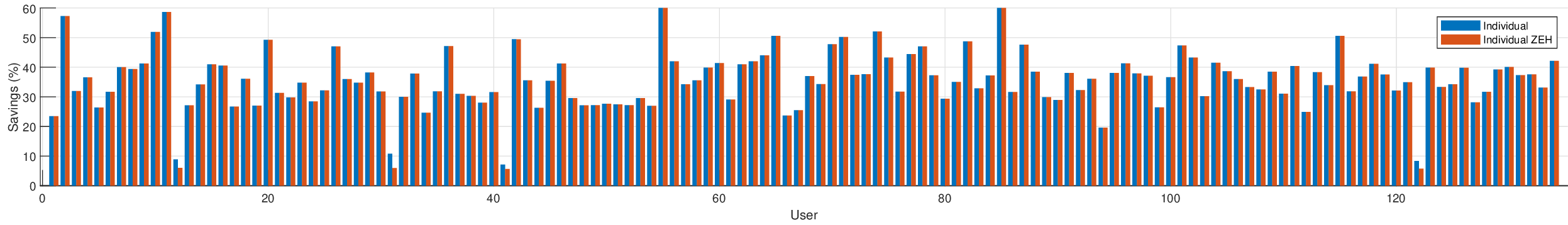}
    \caption{Improvement of the cost of each user with respect to the case where batteries and PV panels are not installed for the FiT scenario.}
    \label{fig:cost_4500_-5}
\end{figure*}

Finally, Table~\ref{table:4500_rev-5} shows the summarized results for the proposed set of parameters. As it can be expected, the amount of savings that can be attained are higher than in the two previous settings since instead of penalizing the reverse power flow, users can make a profit of their spare energy. Also, it can be seen that almost all users attain ZEH naturally. Only a small percentage of users do not achieve ZEH in the original optimization problem. This might be related to the fact that the solar radiation of these houses is scarce and thus it is very expensive to generate energy by using these means. 
 \begin{table}[htbp]
\caption{Summarized results for the FiT scenario.}
 \begin{center}
 \begin{tabular}{|c|c|c|c|c|c|}
 \hline
  &  Ind. & Ind. ZEH & Sharing & Sharing ZEH  \\
 \hline \hline
 Av. PV ($\text{m}^{2}$) & 70.36 & 71.59 & 59.70 & 59.70  \\ \hline
 Av. battery (kWh) & 7.903 & 8.065 & 8.493 & 8.493  \\  \hline
 ZEH (\%) & 96.27 & 99.25 & 100 & 100 \\ \hline
 Savings (\%) & 35.74 & 35.65 & 41.38 & 41.38  \\ \hline
 \end{tabular}
 \end{center}
   \label{table:4500_rev-5}
 \end{table} 

\subsection{Discussion}

As expected, various reverse power policies encourage different investment behaviors depending on the chosen value of $\Pi_{R}$. When considered as a penalty, it is opposite to our objective, which is attaining ZEH. In order to avoid the reverse power penalty, users tend not to buy a large quantity of PV panels. When considering the sharing-based approach, the situation remains the same. Thus, a different kind of mechanism should be considered to tackle the problem of reverse power as this approach prevents users from attaining ZEH.

On the opposite side, when considering FiT, the amount of PV becomes too large. See Fig.~\ref{fig:phi_discussion}, where the sum of $\phi_{k}^{+}$ and $\phi_{k}^{-}$ for all users (individual approach) during April when $\Pi_{R} = 10$, $\Pi_{R} = 0$ and $\Pi_{R} = -5$ is shown. There, it can be seen the magnitude of the reverse power flow and the amount of needed gas for every considered value of $\Pi_{R}$. Also, Table~\ref{table:discussion_phipl} and  \ref{table:discussion_phim} show the average amount of $\phi_k^{+}$ and $\phi_k^{-}$. Again, it can be seen that the amount of reverse power is much greater when considering FiT. Then, we have the following dilemma: if we penalize the reverse power flow, users will not achieve ZEH, but if we encourage the acquisition of PV by means of FiT, then the amount of reverse power is excessively high and it might lead to malfunctions in the grid. As an intermediate case, we have $\Pi_{R} = 0$, where the amount of PV is considerably higher than $\Pi_{R} = 10$, but not as high as when considering $\Pi_{R} = -5$. Some users can achieve ZEH on their own, but it is only a small minority. However, when they work together in the sharing approach, it is possible to attain ZEH as a group (which was even impossible when $\Pi_{R} = 10$). 

\begin{figure}[tbh]
    \centering
    \includegraphics[width=0.48\textwidth]{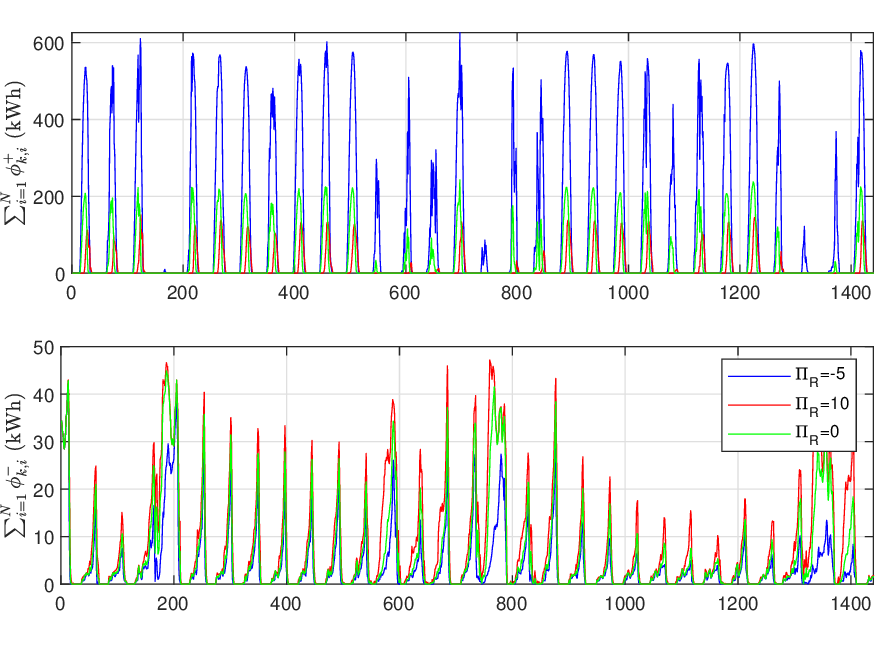}
    \caption{Total amount of $\phi_k^{+}$ (on the top) and total amount of $\phi_k^{-}$ (at the bottom) during April.}
    \label{fig:phi_discussion}
\end{figure}

 \begin{table}[htbp]
     \caption{Average $\phi_k^{+}$ per user every 30 minutes.}
 \begin{center}
 \begin{tabular}{|c|c|c|c|}
 \hline
  &  $\Pi_{R}=-5$ & $\Pi_{R}=0$ & $\Pi_{R}=10$  \\
 \hline \hline
Individual & 0.4945 & 0.1017 & 0.0346  \\ \hline
Community & 0.3998 & 0.1079 & 0.0366 \\ \hline
 \end{tabular}
 \end{center}
   \label{table:discussion_phipl}
 \end{table} 
 
  \begin{table}[htbp]
      \caption{Average $\phi_k^{-}$ per user every 30 minutes.}
 \begin{center}
 \begin{tabular}{|c|c|c|c|}
 \hline
  &  $\Pi_{R}=-5$ & $\Pi_{R}=0$ & $\Pi_{R}=10$  \\
 \hline \hline
Individual & 0.0826 & 0.1196 & 0.1517  \\ \hline
Community & 0.0633 & 0.0974 & 0.1251 \\ \hline
 \end{tabular}
 \end{center}
   \label{table:discussion_phim}
 \end{table} 

In summary, according to the data of the consumption and solar generation,  the right incentive for this case would be to choose $\Pi_{R}=0$ and make users cooperate. This would attain ZEH 
while at the same time preventing the power grid from being subjected to an excessive amount of reverse power. In order to alleviate the effect of the reverse power flow, an appropriate online management of the battery can be considered. That is, one could try to make the reverse power flow as constant as possible by deciding  when the battery should charge or discharge. 

\subsection{Computation times}
In order to emphasize the contribution of the paper and the importance of the proposed LP problem, we implement the original problem
\eqref{eq:original_after_remark} in Gurobi to show that even commercial solvers have trouble solving the original problems. 

We compare the computational time needed for solving problem \eqref{eq:original_after_remark} in Gurobi and the reformulated LP problem \eqref{eq:num_example_individual} using linprog on MATLAB with different values of $K$.
Note that the price of the PV and the battery are considered to be amortized during these periods of time so that the problem still makes sense. The comparison can be seen in Fig.~\ref{fig:time_comparison}. It is shown that
the LP problem is solved much faster than the original problem. Specifically,
when considering 58 days (about two months), the original problem becomes computationally challenging, taking up to 14 hours to be solved. In contrast, the full LP problem can be solved in less than ten seconds. 
Therefore, given the extensive scale of the original problem with large $K$, it cannot be considered tractable in practice.
\begin{figure}[tbh]
    \centering
    \includegraphics[width=0.48\textwidth]{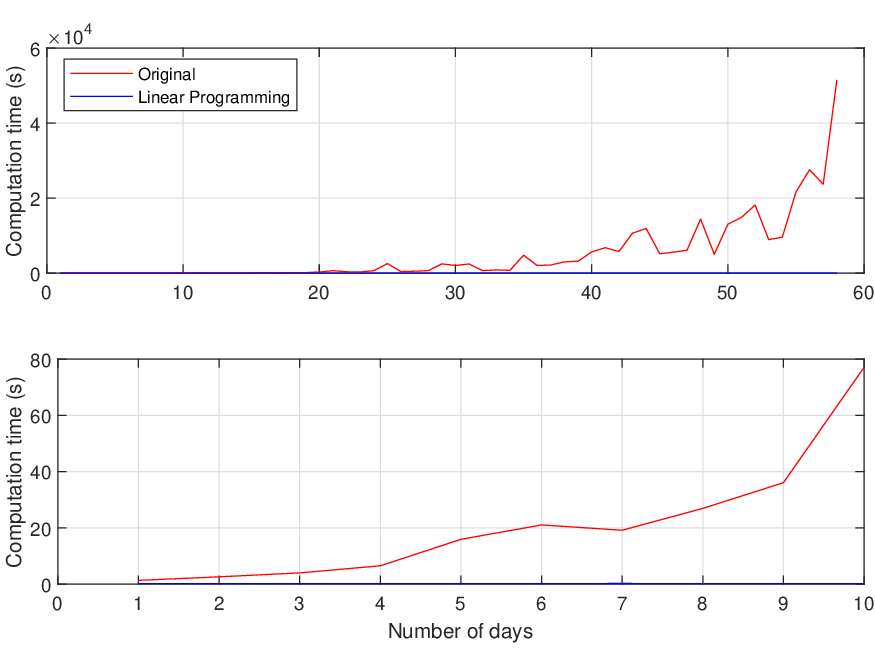}
    \caption{On the top: computation times up to approximately two months. On the bottom: computation times of the first 10 days.} 
    \label{fig:time_comparison}
\end{figure}

\section{Conclusion}
\label{sec:conclusions}
In this paper, we proposed optimization problems to optimally decide the amount of PV panels and the size of the battery to be installed in a house in order to save money in electricity bills and, more importantly, achieve ZEH. Concretely, the main highlights of the paper were:
\begin{itemize}
    \item We have shown the potential advantages of the sharing economy in comparison to a strictly individual investment approach.
    \item It has been shown that the nonconvex optimization problems associated with the optimal sizing in this paper can be posed as LP problems and thus they can be easily solved. Also, they are not computationally heavy, compared to other existing methods in the literature.
    \item Through real data obtained from a neighborhood in Japan, we have shown in the numerical examples how the optimization problems perform for three sets of parameters corresponding to different incentive scenarios. 
    \item Finally, it has been
    shown that a sharing-based approach without penalty in the reverse power can achieve ZEH while obtaining a low reverse power flow. 
\end{itemize}
On the other hand, it should be noted that
the optimization problem only considers data for a specific year, which might lead to misleading results if the year is not representative of actual household consumption and generation.
This could be solved by integrating stochastic optimization approaches with the proposed LP problem. 
Therefore, we consider as future work  
the improvement for the aforementioned investment problem, the development of efficient management strategies for the battery to minimize the effect of the reverse power in the grid, and different sharing strategies.

\appendices
\section{}
In this appendix, we test the suitability of the model in section \ref{sec:single}, that is
\begin{equation}
C_{k} =   \left[ C_k^{+} \right]^{\bar{C}}_{0}   :=
    \begin{cases}
        0 & \text{if } C_{k}^{+} \leq 0 \\
        \bar{C} & \text{if }  C_{k}^{+}  \geq \bar{C} \\
        C_{k}^{+} & \text{else} 
    \end{cases}  
\end{equation}
where
\begin{equation}
C_{k}^{+} = \gamma \,C_{k-1} + a Y_{k-1} - X_{k-1}, \; \gamma = 0.99998.
\end{equation}
For this purpose, the model is compared against a MATLAB Simscape battery block. This battery corresponds to a lithium-ion battery with a nominal voltage of $12.6$V, rated capacity of $675$Ah and a battery response time of $90$s.
In this experiment, we inject steps of different amplitude and length to both the Simulink system and the proposed model for a total of 400 hours (approximately 17 days). 

The results are shown in Fig.~\ref{fig:model_comparison}. There, it can be seen that the proposed model is obtaining a good approximation of the real system, attaining a mean squared error of $1.2542$, which can be considered low taking into account the order of magnitude of the SoC. Thus, the model is valid for the investment problem.
\begin{figure}[tbh]
    \centering
    \includegraphics[width=0.48\textwidth]{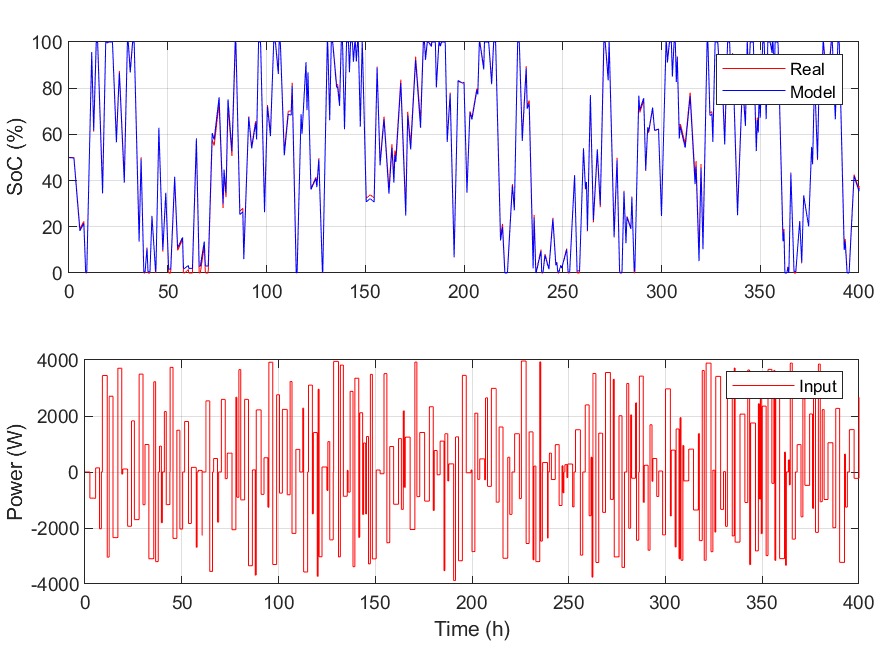}
    \caption{On the top: real SoC and prediction obtained by means of the proposed model. On the bottom: applied random input.}
    \label{fig:model_comparison}
\end{figure}

\bibliographystyle{IEEEtran}
\bibliography{Bibl}

\EOD

\end{document}